\theoremstyle{definition}
\newtheorem{theorem}[equation]{Theorem}
\newtheorem{proposition}[equation]{Proposition}
\newtheorem{lemma}[equation]{Lemma}
\newtheorem{definition}[equation]{Definition}
\theoremstyle{remark}
\newtheorem*{remark}{Remark}
\numberwithin{equation}{section}
\newcommand{\wB}{\mathbf{B}_{\lambda}}
\begin{document}

\title[Weighted Bergman Projections]{Weighted Bergman Projections and Kernels: $L^p$ Regularity and Zeros }
\author{Yunus E. Zeytuncu}
%\thanks{I'd like to thank Prof. H.P. Boas for useful remarks.}
%\address{Department of Mathematics, \newline Ohio State University, Columbus, Ohio 43210}
\subjclass[2010]{Primary 30H20, 32A36; Secondary: 47B32}
\date{}
\address{Department of Mathematics, Texas A\&M University, College Station, Texas 77843}
\email{yunus@math.ohio-state.edu \\ zeytuncu@math.tamu.edu}
\begin{abstract} We investigate $L^p$ regularity of weighted Bergman projections and zeros of weighted Bergman kernels for the weights that are radially symmetric and comparable to 1 on the unit disc.
\end{abstract}
%\keywords{}
\maketitle

\section{Introduction}

\subsection{Setup and Notation}

Let $\mathbb{D}$ denote the unit disc in $\mathbb{C}^1$ and let $\lambda(r)$ be a non-negative function on $[0,1)$ that is comparable to 1. 

\begin{definition}
A  function $\lambda(r)$ on $[0,1)$ is said to be comparable to $1$ and  denoted by $\lambda\sim 1$, if there exists $C>0$ such that $\frac{1}{C}\leq \lambda(r)\leq C$ for all $r\in[0,1)$.\end{definition}

We consider $\lambda$ as a radial weight on $\mathbb{D}$ by setting $\lambda(z)=\lambda(|z|)$. We denote the Lebesgue measure on $\mathbb{C}$ by $dA(z)$ and  the space of square integrable functions on $\mathbb{D}$ with respect to the measure $\lambda(z)dA(z)$ by $L^2(\lambda)$. This is a Hilbert space with the inner product and the norm defined by
\begin{align*}
\left<f,g\right>_{\lambda}=\int_{\mathbb{D}}f(z)\overline{g(z)}\lambda(z)dA(z)~\text{ and }~
||f||_{\lambda}^2=\int_{\mathbb{D}}|f(z)|^2\lambda(z)dA(z).\\
\end{align*}

The space of holomorphic functions that are in $L^2(\lambda)$ is denoted by $A^2(\lambda)$. The Bergman inequality (see the first page of \cite{Duren04}) shows that $A^2(\lambda)$ is a closed subspace of $L^2(\lambda)$. The orthogonal projection between these two spaces is called \textit{the weighted Bergman projection} and denoted by $\wB$, i.e.
\begin{equation}\label{projection}
\mathbf{B}_{\lambda}: L^2(\lambda) \to A^2(\lambda).
\end{equation}\\

It follows from the Riesz representation theorem that $\wB$ is an integral operator. The kernel is called \textit{the weighted Bergman kernel} and denoted by $B_{\lambda}(z,w)$, i.e. for any $f\in L^2(\lambda)$,
\begin{equation}\label{integraloperator}
\mathbf{B}_{\lambda}f(z)=\int_{\mathbb{D}} B_{\lambda}(z,w)f(w)\lambda(w)dA(w).
\end{equation}

For a radial weight $\lambda$ as above, the monomials $\{z^n\}_{n=0}^{\infty}$ form an
orthogonal basis for $A^2(\lambda)$ and after normalization the weighted Bergman kernel is
given by, $B_{\lambda}(z,w)=\sum_{n=0}^{\infty}\alpha_n(z\bar w)^n$
 where $\alpha_n=\frac{1}{2\pi\int_0^1r^{2n+1}\lambda(r)dr}.$\\

The general theory and details can be found in \cite{ForelliRudin} and \cite{Zhubook}.\\

\subsection{Questions and Results}

Both the weighted Bergman projection and the weighted Bergman kernel are canonical objects on the weighted space $(\mathbb{D},\lambda)$. It is a fundamental problem how the perturbations of $\lambda$ affect the analytic properties of these canonical objects. In this note, we are particularly interested in the following questions.\\
\begin{itemize}
\item[\textbf{I}.] For a given radial weight $\lambda$ on $\mathbb{D}$ as above, does the weighted kernel $B_{\lambda}(z,w)$ have zeros in $\mathbb{D}\times\mathbb{D}$?\\
\item[\textbf{II}.] For a given radial weight $\lambda$ on $\mathbb{D}$ as above, for which values of $p\in(1,\infty)$ is the weighted projection $\wB$ bounded from $L^p(\lambda)$ to $L^p(\lambda)$?\\
\end{itemize}
The answers for both of the questions are known for $\lambda(r)\equiv 1$. In this case, a direct computation gives that
\begin{equation}\label{kernel}
B_{1}(z,w)=\frac{1}{\pi}\sum_{n=0}^{\infty}(n+1)(z\bar w)^n=\frac{1}{\pi(1-z\bar{w})^2}.
\end{equation}
This closed form immediately tells that $B_1(z,w)$ never vanishes inside $\mathbb{D}\times \mathbb{D}$. Moreover, this explicit form and Schur's lemma prove that $\mathbf{B}_1$ is bounded from $L^p(1)$ to $L^p(1)$ for all $p\in(1,\infty)$.\\ 

For an arbitrary radial weight, in general, we do not have such a closed form for the kernel and it requires more work to answer questions above. In the next two sections, we prove the following two theorems that answer Questions \textbf{I} and \textbf{II}.\\

\begin{theorem}\label{zeros}
There exists a radial weight $\lambda$ on $\mathbb{D}$, comparable to $1$, such that the kernel function $B_{\lambda}(z,w)$ has zeros.\\
\end{theorem}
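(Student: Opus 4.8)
The plan is to exploit the explicit series representation $B_\lambda(z,w)=\sum_{n=0}^\infty \alpha_n (z\bar w)^n$ with $\alpha_n = \left(2\pi\int_0^1 r^{2n+1}\lambda(r)\,dr\right)^{-1}$, and to engineer a weight $\lambda\sim 1$ for which the sequence $(\alpha_n)$ fails to be log-convex in the strong sense that guarantees zero-freeness. Since the kernel restricted to a ray is (up to the harmless substitution $\zeta = z\bar w$) the power series $\Phi(\zeta) = \sum_{n} \alpha_n \zeta^n$ on $\mathbb{D}$, the first step is to reduce the question of zeros of $B_\lambda$ on $\mathbb{D}\times\mathbb{D}$ to the question of whether the one-variable holomorphic function $\Phi$ has a zero in $\mathbb{D}$: if $\Phi(\zeta_0)=0$ for some $\zeta_0\in\mathbb{D}$, then writing $\zeta_0 = z\bar w$ with $z,w\in\mathbb{D}$ (always possible, e.g.\ $w=\bar{\zeta_0}/\bar z$ for suitable $z$) produces a genuine zero of the kernel.

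Next I would set up the moments in a form I can control. The idea is to choose $\lambda$ to be a small radial perturbation of $1$ concentrated near a single radius, so that $\alpha_n$ agrees with the unweighted moments $(n+1)/\pi$ except for a targeted distortion at a few consecutive indices. Because $\alpha_n$ depends on $\lambda$ through the moment $\int_0^1 r^{2n+1}\lambda(r)\,dr$, and because the kernels $r\mapsto r^{2n+1}$ for distinct $n$ are quite different in where they concentrate mass (larger $n$ weight $r$ near $1$), I expect that a bump in $\lambda$ placed at a well-chosen radius can raise $\alpha_{n_0}$ relative to its neighbors $\alpha_{n_0-1},\alpha_{n_0+1}$, thereby breaking the total positivity/log-convexity that forces zero-freeness in the unweighted case. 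The comparability constraint $\tfrac1C\le\lambda\le C$ must be respected throughout, so the perturbation should be an $O(1)$ multiplicative factor supported on a thin annulus rather than a spike.

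The cleanest route to actually exhibiting a zero is to truncate and perturb simultaneously. I would aim to make $\Phi$ behave like a low-degree polynomial plus a controllable tail: choose $\lambda$ so that, say, $\alpha_0,\dots,\alpha_N$ take prescribed values realizing a polynomial $p(\zeta)=\sum_{n\le N}\alpha_n\zeta^n$ that provably has a zero inside $\mathbb{D}$ (for instance by arranging a sign-change-like pattern in the partial sums, or by making the leading controlled coefficient large enough to create a root via an argument-principle or Rouché estimate), while bounding the tail $\sum_{n>N}\alpha_n\zeta^n$ so small on a suitable circle that Rouché's theorem transfers the zero of $p$ to $\Phi$ itself. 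Establishing that the tail is uniformly small requires lower bounds on the moments $\int_0^1 r^{2n+1}\lambda(r)\,dr \gtrsim \frac1n$ coming from $\lambda\gtrsim 1$, which control $\alpha_n\lesssim n$ and hence the tail on circles of radius bounded away from $1$.

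The main obstacle I anticipate is the tension between two requirements: to force a zero I want a substantial, well-placed distortion of the moment sequence, but comparability to $1$ only permits bounded multiplicative perturbations, which translate into rather mild relative changes in consecutive $\alpha_n$ (since each $\alpha_n$ averages $\lambda$ against a broad weight $r^{2n+1}$, the $\alpha_n$ cannot be moved independently). The delicate part is therefore a quantitative moment-design step: showing that a bounded radial perturbation suffices to push some normalized coefficient ratio across the threshold needed for a zero, and simultaneously keeping enough control of the whole sequence that the Rouché (or direct root-location) argument closes. I would expect to spend most of the effort verifying that these two estimates can hold for the same explicitly constructed $\lambda$.
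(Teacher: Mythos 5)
Your overall skeleton --- reduce to the one-variable series $\Phi(t)=\sum_{n\ge0}\alpha_n t^n$ with $t=z\bar w$, distort the low moments with a bounded bump, and finish with Rouch\'e --- is the same skeleton as the paper's, but the step you explicitly defer (the ``quantitative moment-design step'') is the entire proof, and as outlined it does not close. Two concrete problems. First, your suggestion to use an $O(1)$ multiplicative factor on a \emph{thin annulus} cannot work: such a perturbation changes every moment $\int_0^1 r^{2n+1}\lambda(r)\,dr$ by $O(\delta)$, where $\delta$ is the width, so all $\alpha_n$ move by a uniformly small relative amount; what is needed is a bump on a full disc $\{r\le x\}$ about the origin, which distorts the low moments by a bounded but \emph{not small} factor while leaving the high moments essentially untouched. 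Second, and more seriously, the truncate-and-dominate scheme is quantitatively very tight, because $\lambda\sim1$ forces $\alpha_n\asymp n+1$, so the tail you must dominate has linearly growing coefficients. For a moderate bump such as the paper's own weight ($\lambda=18$ on $[0,\tfrac14]$, $1$ elsewhere, which gives $\alpha_0=\tfrac{16}{33\pi}$, $\alpha_1=\tfrac{512}{273\pi}$, and $\alpha_n$ within $1\%$ of $\tfrac{n+1}{\pi}$ for $n\ge2$), the linear truncation fails at \emph{every} admissible radius: $\min_{|t|=\rho}|\alpha_0+\alpha_1 t|=\alpha_1\rho-\alpha_0$ falls short of the tail $\sum_{n\ge2}\alpha_n\rho^n$ by a factor of at least about $3.5$ for all $\rho\in(0,1)$. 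Nor does raising the truncation degree $N$ rescue the argument abstractly: by Hurwitz, the high truncations have a zero in $|t|<\rho$ essentially if and only if $\Phi$ does, which is exactly what you are trying to prove; so without an explicit weight and explicit numbers the scheme is circular.

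The gap can be filled in two ways, and comparing them is instructive. The paper's device is to multiply by $(1-t)^2$ \emph{before} splitting: $(1-t)^2\Phi(t)=L(t)+S(t)$, where $L(t)=\alpha_0+(\alpha_1-2\alpha_0)t$ and $S$ has the second differences $\alpha_k-2\alpha_{k-1}+\alpha_{k-2}$ as coefficients. This single multiplication converts the linearly growing tail into a summable one: for the step weight the second differences are all negative, so their absolute sum telescopes to the explicit small number $(\alpha_1-\alpha_0)-2$, while $L$ still has a zero at $t=-\tfrac{91}{170}$; Rouch\'e on $|t|=1-\epsilon$ finishes, and since $(1-t)^2\ne0$ on $\mathbb{D}$ the zero passes back to $\Phi$. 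Alternatively, your direct scheme can be made to work verbatim, but only with a much more extreme bump than you (or the paper) contemplate: for $\lambda=10^6$ on $[0,10^{-2}]$ and $1$ elsewhere, one computes $\alpha_n=\frac{n+1}{\pi\left(1+(10^6-1)\cdot10^{-4(n+1)}\right)}$, hence $\alpha_0\approx\frac{1}{101\pi}$, $\alpha_1\approx\frac{2}{1.01\pi}$, and $\alpha_n\le\frac{n+1}{\pi}$ for all $n$; then on $|t|=0.3$ one has $\alpha_1(0.3)-\alpha_0\approx\frac{0.584}{\pi}$, while the tail is at most $\frac{1}{\pi}\left(\frac{1}{(0.7)^2}-1-0.6\right)\approx\frac{0.441}{\pi}$, so Rouch\'e produces a zero of $\Phi$ near $t\approx-0.005$. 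Either way, the theorem is proved only once such an explicit weight and a verified inequality are supplied; your outline stops short of both, and its stated design heuristics point away from the configurations that actually work.
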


\begin{theorem}\label{one}
Suppose $\lambda$ is a non-negative radial weight on $\mathbb{D}$ that is comparable to $1$. 
Then the weighted Bergman projection $\mathbf{B}_{\lambda}$ is bounded from $L^p(\lambda)$ to $L^p(\lambda)$ for all $p\in(1,\infty)$.\\
\end{theorem}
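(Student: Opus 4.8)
The plan is to reduce the weighted projection to the ordinary (unweighted) one composed with a Taylor-coefficient multiplier, and then to show that this multiplier is bounded on $L^p$ by a Marcinkiewicz-type argument. Since $\lambda\asymp 1$, the norms $\|\cdot\|_{L^p(\lambda)}$ and $\|\cdot\|_{L^p(dA)}$ are equivalent, so it suffices to prove that $\wB$ is bounded on $L^p(dA)$ for $1<p<\infty$. Writing $\|z^n\|_\lambda^2 = 2\pi\int_0^1 r^{2n+1}\lambda(r)\,dr = 1/\alpha_n$ and using $\langle f,z^n\rangle_\lambda = \langle \lambda f, z^n\rangle_1$, I would first establish the operator identity
\begin{equation*}
\wB = T_c\circ \mathbf{B}_1\circ M_\lambda, \qquad c_n := \frac{\|z^n\|_1^2}{\|z^n\|_\lambda^2} = \frac{\pi\alpha_n}{n+1},
\end{equation*}
where $M_\lambda$ is multiplication by $\lambda$ and $T_c\big(\sum a_n z^n\big) = \sum c_n a_n z^n$ is the coefficient multiplier. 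This is immediate on $L^2$ from the expansion $\wB f = \sum_n \alpha_n \langle f,z^n\rangle_\lambda\, z^n$ and extends to $L^p$ by density. Since $M_\lambda$ is trivially bounded on $L^p(dA)$ and $\mathbf{B}_1$ is bounded on $L^p(dA)$ for $1<p<\infty$ (as recalled in the introduction), everything reduces to the boundedness of $T_c$ on the holomorphic subspace.

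Next I would show $T_c$ is bounded on $A^p$ by a slicing argument: for fixed $r$, the radial slice $F_r(\theta)=F(re^{i\theta})$ satisfies $(T_cF)_r = M_c[F_r]$, where $M_c$ is the Fourier multiplier on the circle with symbol $(c_n)$. Integrating in $r$,
\begin{equation*}
\|T_c F\|_{A^p}^p \asymp \int_0^1 \|M_c[F_r]\|_{L^p(\mathbb{T})}^p\, r\,dr \le A_p^p \int_0^1 \|F_r\|_{L^p(\mathbb{T})}^p\, r\,dr \asymp A_p^p\,\|F\|_{A^p}^p,
\end{equation*}
provided $M_c$ is bounded on $L^p(\mathbb{T})$ with a constant $A_p$ independent of $r$. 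By the Marcinkiewicz multiplier theorem this holds as soon as $(c_n)$ is bounded and has uniformly bounded dyadic variation, i.e. $\sup_n|c_n|<\infty$ and $\sum_{2^k\le n<2^{k+1}}|c_{n+1}-c_n|\le M$ for all $k$.

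The core estimate, and the step I expect to be the main obstacle, is verifying these two conditions for a merely bounded, possibly rough, weight. After the substitution $t=r^2$ the moments become $\mu_n = \int_0^1 t^n\tilde\lambda(t)\,dt$ with $\tilde\lambda(t)=\lambda(\sqrt t)\asymp 1$, and $c_n = 1/[(n+1)\mu_n]$. The key point is that, although $\tilde\lambda$ need not be continuous, the moments form a Laplace transform: with $t=e^{-u}$ and $g(u)=\tilde\lambda(e^{-u})\asymp 1$ one gets $\mu(s)=\int_0^\infty e^{-(s+1)u}g(u)\,du$, which is smooth in $s$ with $\mu^{(k)}(s)=\int_0^\infty(-u)^k e^{-(s+1)u}g(u)\,du$. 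Comparing with $g\equiv 1$ gives $\mu(s)\asymp (s+1)^{-1}$ and $|\mu'(s)|\asymp (s+1)^{-2}$, whence $(s+1)\mu(s)\asymp 1$ and $\big|\tfrac{d}{ds}[(s+1)\mu(s)]\big|\lesssim (s+1)^{-1}$. This yields simultaneously $c_n\asymp 1$ and $|c_{n+1}-c_n|\lesssim (n+1)^{-1}$, so each dyadic block contributes $\sum_{2^k\le n<2^{k+1}}(n+1)^{-1}\asymp 1$. Hence the Marcinkiewicz condition holds, $T_c$ is bounded on $A^p$, and the chain $\wB=T_c\mathbf{B}_1 M_\lambda$ is bounded on $L^p(dA)\cong L^p(\lambda)$. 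The essential insight is that the moment map smooths the rough weight into a slowly varying (indeed completely monotone) symbol, so that only boundedness, and not any regularity, of $\lambda$ is needed.
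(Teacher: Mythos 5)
Your proof is correct, and it takes a genuinely different route from the paper's. The common ground is the first reduction (since $\lambda\sim 1$, it suffices to bound $\mathbf{B}_{\lambda}$ on $L^p(1)$) and the fact that everything ultimately rests on quantitative control of the coefficients $\alpha_n$; after that the arguments diverge. The paper stays on the disc and works with the kernel: it writes $B_{\lambda}(z,w)=\frac{1}{1-z\bar w}\sum_n b_n(z\bar w)^n$ with $b_n=\alpha_n-\alpha_{n-1}$, dominates $|\mathbf{B}_{\lambda}f|$ pointwise by the geometric mean of two operators whose kernels are squared moduli of bounded-coefficient power series (a Cauchy--Schwarz trick), and bounds those by Schur's test with the auxiliary function $h(w)=(1-|w|^2)^{-1/(pq)}$; the only input on the weight is that $\alpha_{n+1}-\alpha_n$ is bounded (computed to be $\sim 2$). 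You instead factor the operator, $\mathbf{B}_{\lambda}=T_c\circ\mathbf{B}_1\circ M_{\lambda}$ with $c_n=\pi\alpha_n/(n+1)$, and transfer the multiplier $T_c$ to the circle by slicing, where the Marcinkiewicz theorem applies; the input on the weight is $c_n\asymp 1$ and $|c_{n+1}-c_n|\lesssim (n+1)^{-1}$, which you extract from the Laplace-transform representation of the moments. The two sufficient conditions are closely related: bounded differences of $\alpha_n$ force $\alpha_n=O(n)$, hence $c_n=O(1)$ and $c_{n+1}-c_n=O(1/n)$, so the paper's hypothesis implies yours. What the paper's route buys is self-containedness: Schur's test is the only tool, and the $L^p$-boundedness of $\mathbf{B}_1$, which you invoke as a black box, is subsumed in the same lemma. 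What your route buys is a cleaner conceptual picture (the weighted projection differs from the unweighted one by a Fourier multiplier composed with $\mathbf{B}_1$) and the genuinely nice observation that the moment map $s\mapsto\int_0^1 t^s\tilde\lambda(t)\,dt$ automatically smooths an arbitrarily rough bounded weight into a slowly varying, completely monotone symbol. Two points to make explicit if you write this up: (i) for $p<2$ the identity $\mathbf{B}_{\lambda}=T_c\mathbf{B}_1M_{\lambda}$ and the resulting estimate should be asserted on $L^2\cap L^p$ and extended by density, which is also how the paper's statement must be read; (ii) to apply Marcinkiewicz you should extend $c_n$ by zero to negative indices, which is harmless because the slices $F_r$ are analytic and the extended sequence still satisfies the dyadic variation condition.
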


In the sprit of the problem above, these two theorems say that a comparable perturbation of the radial weight on $\mathbb{D}$ might change the vanishing properties of the kernel but does not alter the $L^p$ mapping properties of the projection.\\

I thank J.D. McNeal, my advisor, for introducing me to this field and helping me with various points. I thank H.P. Boas for helpful remarks on an earlier version of this paper.

\section{Zeros of Weighted Bergman Kernels}

Question \textbf{I} recalls the well-known Lu Qi-Keng problem that asks about the zeros of Bergman kernels as the underlying domain changes. A detailed survey of this problem can be found in \cite{Boas98}. 

In this section, after the proof of Theorem \ref{zeros}, we use the Forelli-Rudin formula from \cite{ForelliRudin} (also called inflation principle in \cite{BoasFuStraube99}) to explore zeros of Bergman kernels in higher dimensions.

\begin{proof}[Proof of Theorem \ref{zeros}] Define
\(
  \lambda(r) = \left\{
  \begin{array}{ll}
  18, & 0\leq r \leq \frac{1}{4} \\
  1, & \frac{1}{4}<r\leq1 \\
  \end{array}\right.
  \)\\
For this weight, we explicitly compute the coefficients $\alpha_n$ and get
\begin{equation*}
\alpha_n=\frac{1}{2\pi}\frac{16^{n+1}(2n+2)}{16^{n+1}+17}.
\end{equation*}

\noindent We split the product $(1-z\bar{w})^2B_{\lambda}(z,w)$ into two parts as
\begin{align*}
(1-z\bar{w})^2&\sum_{k=0}^{\infty}\alpha_{k}(z\bar{w})^k=&\\
=&\alpha_{0}+(\alpha_1-2\alpha_0)z\bar{w}+\sum_{k=2}^{\infty}(\alpha_{k}-2\alpha_{k-1}+\alpha_{k-2})(z\bar{w})^k&\\
:=&L(t)+S(t)&\\
\end{align*}
where $L(t)$ denotes the linear part, $S(t)$ denotes the series part and $t$ stands for $z\bar{w}$. Note that $L(t)$ has a zero in $\mathbb{D}$ by explicitly computing $\alpha_0$ and $\alpha_1$. Indeed, $\alpha_0=\frac{16}{33\pi}$ and $\alpha_1=\frac{512}{273\pi}$ so $L(t)$ vanishes at $t=\frac{-91}{170}.$\\

\noindent Next, we show that 
\begin{align}\label{estimate}
\min_{|t|=1-\epsilon}|L(t)|> \max_{|t|=1-\epsilon}|S(t)| 
\end{align}
for small enough $\epsilon>0$.\\

It is clear that $\max_{|t|=1-\epsilon}|S(t)|< \sum_{k=2}^{\infty}|\alpha_{k}-2\alpha_{k-1}+\alpha_{k-2}|$, so it is enough to show
\begin{align}\label{estimate2}
\min_{|t|=1-\epsilon}|L(t)|&>\sum_{k=2}^{\infty}|\alpha_{k}-2\alpha_{k-1}+\alpha_{k-2}|.
\end{align}

By using the explicit formula for $\alpha_k$, we get for $k\geq2$,
\begin{align*}
\alpha_k-\alpha_{k-1}&=\frac{32\times16^{2k}+34\times16^k(15k+16)}{(16^k+17)(16^{k+1}+17)}\\
\alpha_{k-1}-\alpha_{k-2}&=\frac{2\times16^{2k-1}+34\times16^{k-1}(15k+1)}{(16^k+17)(16^{k-1}+17)}.
\end{align*}
By comparing right hand sides, we note that $\alpha_{k}-\alpha_{k-1}<\alpha_{k-1}-\alpha_{k-2}$. Therefore, we get $\alpha_{k}-2\alpha_{k-1}+\alpha_{k-2}<0$ for $k\geq2$ and hence the sum on the right hand side of \eqref{estimate2} is telescoping and converges to $$\alpha_1-\alpha_0-\lim_{k\to\infty}(\alpha_k-\alpha_{k-1})=(\alpha_1-\alpha_0)-2.$$\\

\noindent On the other hand, by a direct calculation, for small enough $\epsilon>0$, 
$$\min_{|t|=1-\epsilon}|L(t)|=(\alpha_1-3\alpha_0)-\epsilon(\alpha_1-2\alpha_0).$$ 
Thus, it remains to show $$(\alpha_1-3\alpha_0)-\epsilon(\alpha_1-2\alpha_0)>(\alpha_1-\alpha_0)-2,$$ for small enough $\epsilon$, in order to get the inequality \eqref{estimate2}. The last inequality is verified by plugging in the actual values of $\alpha$'s and we get the inequality \eqref{estimate}.\\

Once the inequality \eqref{estimate} is obtained, we use Rouche's theorem: $L(t)$ has a zero in $|t|<1-\epsilon$ and $|L(t)|>|S(t)|$ on $|t|=1-\epsilon$  for small enough $\epsilon$; therefore, the sum $L(t)+S(t)=(1-t)^2\sum_{k=0}^{\infty}\alpha_kt^k$ has a zero in $\mathbb{D}$. Since $(1-t)^2$ does not vanish in $\mathbb{D}$ we conclude that $B_{\lambda}(z,w)$ has zeros in $\mathbb{D}\times\mathbb{D}$.\\
\end{proof}

\begin{remark}The same argument can be applied to more weights. We can show that by choosing $A,x>0$ suitably, if we define
\(
  \lambda(r) = \left\{
  \begin{array}{ll}
  A, & 0\leq r \leq x \\
  1, & x<r\leq1 \\
  \end{array}\right.
  \)
then again the weighted Bergman kernel has zeros in $\mathbb{D}\times \mathbb{D}$. 

Furthermore, we can modify these discontinuous functions so that we get smooth radial weight functions for which the weighted Bergman kernels have zeros.\\
\end{remark}

For higher dimensional application, let us take the particular weight $\lambda$ in the proof of Theorem \ref{zeros} and consider the following Reinhardt domain in $\mathbb{C}^2$
\begin{equation*}
\Omega=\{(z,w)\in\mathbb{C}^2: z\in \mathbb{D}~\text{ and } |w|^2<\lambda(z)\}.\\
\end{equation*}

Let $B_{\Omega}\left[(z,w),(t,s)\right]$ denote the unweighted Bergman kernel of the domain $\Omega$ and $B_{\lambda}(z,t)$ be the weighted Bergman kernel of $\mathbb{D}$. We have the following relation between these kernels (see \cite{ForelliRudin}, \cite{Ligocka89}, \cite{BoasFuStraube99}),
\begin{equation}
B_{\Omega}[(z,0),(t,0)]=\frac{1}{\pi}B_{\lambda}(z,t).
\end{equation}

Thus, when the weighted kernel has a zero so does the unweighted kernel in higher dimension.\\

\begin{remark} H.P. Boas observed that Theorem \ref{zeros} can be also proven by using Ramadanov's approximation theorem in \cite{Ramadanov}. A direct computation shows that the weighted Bergman kernel for the weight $(1+k\delta_0)$, where $\delta_0$ is the Dirac mass at $z=0$ and $k>0$, has a zero in $\mathbb{D}\times\mathbb{D}$. We approximate $(1+k\delta_0)$ by smooth functions $\phi_n$ that are all comparable to 1. Then Ramadanov's theorem and Hurwitz's theorem together say that the weighted kernels $B_{\phi_n}(z,w)$ must have zeros in $\mathbb{D}\times \mathbb{D}$ after a certain value of $n$. See \cite{Boas96} for the details of this method.\\
\end{remark}

\section{$L^p$ mapping properties}

Before we prove Theorem \ref{one}, we generalize Question \textbf{II} to the following setup. For a given sequence of complex numbers $\{\beta_n\}$, define a Bergman type integral operator as:
\begin{equation}\label{operator}
Tf(z)=\int_{\mathbb{D}}K(z,w)f(w)\lambda (w)dA(w)~ \text{ where } ~K(z,w)=\sum_{n=0}^{\infty}\beta_n(z\bar w)^n
\end{equation}
and investigate the $L^p$ boundedness of these operators. The following necessary condition is easy to prove.
\begin{proposition}
If the operator $T$, defined in \eqref{operator}, is bounded from $L^p(1)$ to $L^p(1)$ for some $p\in(1, \infty)$
then $\limsup_{n\to \infty}\frac{|\beta_n|}{n}$ is finite.\\ 
\end{proposition}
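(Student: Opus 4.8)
The plan is to test the operator on the rotation-eigenfunctions of the disc, namely the monomials $w^m$, and to exploit the fact that the radial symmetry of both $\lambda$ and the kernel forces $T$ to send each monomial to a scalar multiple of itself. This reduces the operator inequality to a sequence of scalar inequalities on the $\beta_m$.

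First I would compute $T(w^m)$ explicitly. Writing $w=\rho e^{i\theta}$ with $dA=\rho\,d\rho\,d\theta$ and integrating the angular variable first, the orthogonality relation $\int_0^{2\pi}e^{i(m-n)\theta}\,d\theta=2\pi\delta_{mn}$ annihilates every term of the kernel series $K(z,w)=\sum_n \beta_n (z\bar w)^n$ except the $n=m$ term. This gives
\begin{equation*}
T(w^m)(z)=\beta_m\,c_m\,z^m,\qquad c_m=2\pi\int_0^1 \rho^{2m+1}\lambda(\rho)\,d\rho .
\end{equation*}
Next, invoking the assumed boundedness of $T$ on $L^p(1)$ and applying it to $f=w^m$, I would use that both $T(w^m)$ and $w^m$ are constant multiples of the single monomial $z^m$. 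The common factor $\|z^m\|_{L^p(1)}$ (which is nonzero) then cancels from the inequality $\|T(w^m)\|_{L^p(1)}\le \|T\|\,\|w^m\|_{L^p(1)}$, leaving the clean scalar bound $|\beta_m|\,c_m\le \|T\|$ valid for every $m$.

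Finally I would translate this into the desired growth condition using the hypothesis $\lambda\sim 1$. Since $\tfrac{1}{C}\le\lambda\le C$, a direct estimate of the radial integral yields $\tfrac{\pi}{C(m+1)}\le c_m\le \tfrac{\pi C}{m+1}$, so that $c_m$ is comparable to $1/m$. Combining this with $|\beta_m|\,c_m\le\|T\|$ produces $|\beta_m|\le \tfrac{C}{\pi}\,\|T\|\,(m+1)$, whence $\limsup_{n\to\infty}\tfrac{|\beta_n|}{n}\le \tfrac{C}{\pi}\,\|T\|<\infty$, as claimed.

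The computation is short, and the only genuine technical point is the justification of the term-by-term integration that isolates the $n=m$ coefficient. I would handle this by first integrating over the subdisc $\{|w|\le s\}$ with $s<1$, where the kernel series converges uniformly in $w$ for each fixed $z$ so the interchange of sum and integral is immediate, and then passing to the limit $s\to 1$; the orthogonality of the angular exponentials guarantees that only the finitely surviving $n=m$ contribution matters, so no delicate estimate on the possibly large $\beta_n$ is needed. This is the step I expect to require the most care, but it is a matter of bookkeeping rather than a real obstacle.
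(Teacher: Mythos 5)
Your argument is correct: $T(w^m)=\beta_m c_m z^m$ with $c_m=2\pi\int_0^1\rho^{2m+1}\lambda(\rho)\,d\rho\asymp (m+1)^{-1}$ since $\lambda\sim 1$, so boundedness of $T$ on $L^p(1)$ applied to the monomials gives $|\beta_m|c_m\le\|T\|$ and hence $|\beta_m|\lesssim m$; the term-by-term integration is legitimate because, for fixed $z\in\mathbb{D}$, the kernel series is a power series in $\bar w$ with radius of convergence $1/|z|>1$ (the implicit standing assumption that makes $K(z,w)$ well defined on $\mathbb{D}\times\mathbb{D}$). The paper gives no proof of this proposition --- it is only stated as ``easy to prove'' --- and your monomial-eigenfunction computation is precisely the standard argument that remark alludes to, so your proposal matches the intended route.
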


Unfortunately, it turns out that this necessary condition is not sufficient. Namely, there exists a sequence $\{\gamma_n\}$ such that $\limsup_{n\to \infty}\frac{|\gamma_n|}{n}$ is finite but the associated operator by \eqref{operator} is not bounded from $L^{p_0}(1)$ to $L^{p_0}(1)$ for some $p_0\in (1,\infty)$. See the third section of \cite{Vukotic99}.\\

Schur's lemma is one of the most commonly used arguments to prove boundedness of integral operators and in this section we need this lemma. The proof can be found in \cite{Zhubook}.

\begin{lemma}[Schur]\label{Schur}$(X, A, \mu)$ be a sigma finite measure space and $K(x,y)$ be a positive measurable function on $X\times X$. For $p\in (1,\infty)$ suppose there exists a positive measurable function $h(x)$ on $X$ and a finite number $C>0$ such that 
$$\int_X K(x,y)h(x)^pd\mu(x)\leq C h(y)^p\text { for a.e. }y\in X$$ and
$$\int_X K(x,y)h(y)^qd\mu(y)\leq C h(x)^q\text{ for a.e. }x \in X$$
where $\frac{1}{p}+\frac{1}{q}=1$.\\
Then the operator $Of(y)=\int_X K(x,y)f(x)d\mu(x)$ is bounded on $L^p(X, \mu)$.\\
\end{lemma}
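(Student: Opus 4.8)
The plan is to prove the $L^p$ bound directly from the two integral hypotheses by the classical weighted kernel-splitting argument, with no appeal to duality. Since $K$ is positive, $|Of(y)| \le \int_X K(x,y)|f(x)|\,d\mu(x)$, so it suffices to treat $f\ge 0$ and to establish $\|Of\|_{L^p(\mu)}^p \le C'\,\|f\|_{L^p(\mu)}^p$ for a constant $C'$ depending only on $C$ and $p$. Throughout, every integrand in sight is nonnegative, which is what ultimately licenses the interchange of integrations.

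The key step is to factor the kernel as $K(x,y)=K(x,y)^{1/q}\,K(x,y)^{1/p}$ and to insert a compensating power of the auxiliary function $h$. Writing the integrand of $Of(y)$ as $\bigl[K(x,y)^{1/q}h(x)^{\alpha}\bigr]\bigl[K(x,y)^{1/p}f(x)h(x)^{-\alpha}\bigr]$ and applying H\"older's inequality in the variable $x$ with conjugate exponents $q$ and $p$, I obtain $Of(y)\le \bigl(\int_X K(x,y)h(x)^{\alpha q}\,d\mu(x)\bigr)^{1/q}\bigl(\int_X K(x,y)f(x)^p h(x)^{-\alpha p}\,d\mu(x)\bigr)^{1/p}$. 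The exponent $\alpha$ is chosen precisely so that the first factor is governed by the first hypothesis: after raising to the $q$-th power its integrand matches the integrand $K(x,y)h(x)^p$ appearing there, so that factor is dominated by $C^{1/q}$ times a fixed power of $h(y)$, which I pull outside the $x$-integral.

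It then remains to raise the resulting inequality to the $p$-th power, integrate $d\mu(y)$, and invoke the second hypothesis. Because $K$, $f$, and $h$ are nonnegative, Tonelli's theorem permits interchanging the $x$- and $y$-integrations; carrying out the $y$-integration first produces exactly the integral appearing in the second hypothesis, which is bounded by $C$ times the power of $h(x)$ that cancels the leftover compensating factor from the split. What survives is $\int_X f(x)^p\,d\mu(x)$, and collecting the two constants (using $1+p/q=p$) yields $\|Of\|_{L^p(\mu)}\le C\,\|f\|_{L^p(\mu)}$.

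The only genuine content is the choice of split: the power $\alpha$ must be calibrated so that the first hypothesis is used with exactly its stated exponent in the H\"older step (the $x$-integration), while the leftover power of $h$ is exactly the one absorbed by the second hypothesis after the interchange (the $y$-integration). I expect the main point to verify to be the legitimacy of Tonelli's theorem and the a.e.\ finiteness of the intermediate integrals; both are immediate here from nonnegativity of the kernel together with the two hypotheses applied to $f\in L^p(\mu)$, so no deeper difficulty arises.
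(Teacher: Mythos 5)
Your plan is the classical direct Schur-test argument, and it is the right method (the paper itself offers no proof, deferring to Zhu's book, where this is the standard Schur test); but the one step you defer to the end --- the calibration of $\alpha$ --- cannot actually be carried out for the statement as printed, because the two hypotheses are attached to the wrong variables for the operator $Of(y)=\int_X K(x,y)f(x)\,d\mu(x)$. H\"older in $x$ with conjugate exponents $(q,p)$ gives
\begin{equation*}
Of(y)\le\Bigl(\int_X K(x,y)h(x)^{\alpha q}\,d\mu(x)\Bigr)^{1/q}\Bigl(\int_X K(x,y)f(x)^{p}h(x)^{-\alpha p}\,d\mu(x)\Bigr)^{1/p}.
\end{equation*}
To invoke the first hypothesis you are forced to take $\alpha q=p$, i.e.\ $\alpha=p/q$, so the first factor is at most $C^{1/q}h(y)^{p/q}$; after raising to the $p$th power and applying Tonelli, the $y$-integration then requires a bound on $\int_X K(x,y)h(y)^{p^{2}/q}\,d\mu(y)$, whereas the second hypothesis only controls the exponent $q$ on $h(y)$. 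Since $p^{2}/q=q$ exactly when $p=2$, no single $\alpha$ serves both steps (calibrating instead for the second hypothesis forces $\alpha=q/p$ and then the H\"older factor carries $h(x)^{q^{2}/p}$, mismatching the first hypothesis). So the sentence ``the power $\alpha$ must be calibrated so that\dots'' asserts a compatibility that fails for every $p\neq 2$.

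This is not a repairable bookkeeping slip: with these hypotheses and this orientation of the operator, the correct conclusion is boundedness on $L^{q}$, not $L^{p}$, and for $p\neq 2$ the lemma is false as literally stated. For instance, on $X=(0,1)$ with Lebesgue measure, $p=4$, $q=4/3$, take $K(x,y)=x^{-1/6}y^{-1/2}$ and $h(x)=x^{-1/8}$: both displayed hypotheses hold with $C=3$, yet $O1(y)=\tfrac{6}{5}y^{-1/2}\notin L^{4}(0,1)$. The intended statement is the one in Zhu's book, where the operator integrates in the \emph{second} variable, $Tf(x)=\int_X K(x,y)f(y)\,d\mu(y)$ --- equivalently, keep $O$ but swap the exponents $p$ and $q$ in the two hypotheses. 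For that statement your argument closes exactly as you describe, with the simple choice $\alpha=1$ inserted on the integration variable: H\"older yields the factor $\bigl(\int_X K(x,y)h(y)^{q}\,d\mu(y)\bigr)^{1/q}\le C^{1/q}h(x)$, and after Tonelli the inner integral $\int_X K(x,y)h(x)^{p}\,d\mu(x)\le Ch(y)^{p}$ cancels the leftover $h(y)^{-p}$, giving $\|Tf\|_{p}\le C\|f\|_{p}$. Note that in the paper's application the kernel is symmetric in $(z,w)$ and the Schur inequality is verified for every $\epsilon\in(-1,0)$, so both orientations hold there and nothing downstream is harmed; but as a proof of the lemma verbatim, your calibration step fails, and you should transpose the operator (or the hypotheses) before running the argument.
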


\noindent We prove the next lemma by invoking Schur's lemma.

\begin{lemma}\label{first}
Let $\{\beta_n\}$ be a bounded sequence of complex numbers. Then the operator defined by
\begin{equation}\label{square}
Sf(z)=\int_{\mathbb{D}}\left|\sum_{n=0}^{\infty}\beta_n(z\bar w)^n\right|^2f(w)dA(w)
\end{equation}
is bounded from $L^p(1)$ to $L^p(1)$ for all $p\in(1,\infty)$.
\end{lemma}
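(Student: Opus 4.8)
The plan is to apply Schur's lemma (Lemma \ref{Schur}) directly to the positive kernel $\left|\sum_{n=0}^\infty \beta_n(z\bar w)^n\right|^2$ with the measure $d\mu = dA$, and the natural candidate for the Schur test function is a power of $(1-|z|^2)$, as is standard for Bergman-type estimates on the disc. The first step is to obtain a pointwise upper bound on the kernel. Since $\{\beta_n\}$ is bounded, say $|\beta_n|\le M$, I would estimate
\begin{align*}
\left|\sum_{n=0}^\infty \beta_n(z\bar w)^n\right| \le M\sum_{n=0}^\infty |z\bar w|^n = \frac{M}{1-|z||w|},
\end{align*}
so that the kernel is dominated by $\dfrac{M^2}{(1-|z||w|)^2}$. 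This reduces the problem to showing that the integral operator with the majorant kernel $\dfrac{1}{(1-|z||w|)^2}$ is bounded on $L^p(1)$, and boundedness for the majorant immediately gives boundedness for $S$ since $S$ has a smaller positive kernel.

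Next I would carry out the Schur test with the test function $h(z) = (1-|z|^2)^{-s}$ for a suitable exponent $s$ to be chosen. The two required inequalities amount to verifying, for the majorant kernel, that
\begin{align*}
\int_{\mathbb{D}} \frac{(1-|z|^2)^{-sp}}{(1-|z||w|)^2}\, dA(z) \le C\,(1-|w|^2)^{-sp}
\end{align*}
and the symmetric estimate with $q$ and the roles of $z,w$ exchanged. These are estimates of the classical type
\begin{align*}
\int_{\mathbb{D}} \frac{(1-|z|^2)^{a}}{|1-z\bar w|^{b}}\,dA(z) \approx (1-|w|^2)^{a+2-b},
\end{align*}
which hold whenever the exponents are in the admissible range; passing from $|1-z\bar w|$ to $1-|z||w|$ is harmless since $1-|z||w| \le |1-z\bar w|$ only in one direction, so here I would keep the radial quantity $1-|z||w|$ and verify the one-variable integral estimate directly by integrating in the angular variable first. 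The exponent $s$ must be chosen so that $sp$ and $sq$ both fall in the range making both integrals finite and producing the correct matching power on the right-hand side; a symmetric choice such as $s = 1/(pq)$ or $s$ proportional to $\tfrac1p \cdot \tfrac1q$ is the kind of value I expect to work, and I would solve the two constraints simultaneously to pin it down.

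The main obstacle, such as it is, lies in choosing the Schur exponent $s$ so that both integral inequalities hold at once with the same $C$, and in cleanly controlling the integrals against $1-|z||w|$ rather than $|1-z\bar w|$; the radial symmetry of the majorant makes the angular integration explicit, so this is a routine but attention-demanding computation rather than a genuine difficulty. The essential analytic input is just the boundedness of $\{\beta_n\}$, which collapses the kernel to a geometric series and yields the closed-form majorant; everything after that is the standard Bergman Schur-test machinery, and I would expect the proof to be short once the majorant bound and the exponent bookkeeping are in place.
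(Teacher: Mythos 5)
Your proposal breaks at the majorization step, and the break is fatal rather than cosmetic: the integral operator with the radial kernel $(1-|z||w|)^{-2}$ is not bounded on \emph{any} $L^p(\mathbb{D})$, so no choice of Schur exponent $s$ can make the test work, and the claim that ``boundedness for the majorant immediately gives boundedness for $S$'' has nothing to apply to. The pointwise bound $\left|\sum\beta_n(z\bar w)^n\right|^2\le M^2(1-|z||w|)^{-2}$ is true, but it discards exactly the angular cancellation that makes $S$ bounded. Quantitatively: the classical estimate $\int_{\mathbb{D}}(1-|z|^2)^a|1-z\bar w|^{-b}\,dA(z)\approx(1-|w|^2)^{a+2-b}$ gains one full power from the angular integration (e.g.\ $\int_0^{2\pi}|1-r\rho e^{i\theta}|^{-2}d\theta=2\pi(1-r^2\rho^2)^{-1}$), whereas your kernel is already radial, so there is no angular variable left to exploit. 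Writing $\delta=1-|w|$, $s=1-r$, and using $1-r|w|\approx(1-r)+(1-|w|)$, one gets for $-1<a<0$ that $\int_0^1(1-r)^a(1-r|w|)^{-2}dr\approx\int_0^1 s^a(s+\delta)^{-2}ds\approx\delta^{a-1}$, which exceeds the required bound $C\delta^{a}$ by a whole power of $\delta$; so both Schur inequalities fail for every admissible $s$. The failure is intrinsic: testing the majorant operator against $f=\chi_{\{1-\delta<|w|<1\}}$ gives $\|Tf\|_p/\|f\|_p\gtrsim\delta^{-1}\to\infty$. Note also that $1-|z||w|\le|1-z\bar w|$ holds \emph{always}, so your majorant dominates $|1-z\bar w|^{-2}$, i.e.\ the replacement goes in the unfavorable direction and the classical two-index estimate cannot be imported for it.

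The repair is what the paper does, and it preserves your Schur-test skeleton. Since the test function $h(w)=(1-|w|^2)^\epsilon$ is radial, do not majorize the kernel pointwise; instead integrate the kernel against $h^p\,dA$ first and use orthogonality of $\{e^{in\theta}\}$ (Parseval) to kill the cross terms: $\int_{\mathbb{D}}\left|\sum\beta_n(z\bar w)^n\right|^2(1-|w|^2)^\epsilon dA(w)=\sum_n|\beta_n|^2|z|^{2n}\int_{\mathbb{D}}|w|^{2n}(1-|w|^2)^\epsilon dA(w)$. Only now invoke boundedness of $\{\beta_n\}$ and sum the geometric series; the effective radial kernel becomes $\sum_n|z|^{2n}|w|^{2n}=(1-|zw|^2)^{-1}$, a first-order singularity --- one full power better than your majorant, and precisely the power that the one-variable estimate $\int_0^1(1-r)^\epsilon(1-|z|^2r)^{-1}dr\lesssim(1-|z|^2)^\epsilon$ can absorb. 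The symmetric inequality follows from symmetry of the kernel, and the choice $\epsilon=-1/(pq)$ closes the argument exactly as you envisaged.
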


\begin{proof} It is enough to show the inequalities in \eqref{Schur} are satisfied with the correct choice of auxiliary function $h(x)$. Particularly, we take $h(w)=(1-|w|^2)^{\epsilon}$ and prove that for any $-1< \epsilon <0$, there exists $C_{\epsilon}>0$ such that for any $z\in \mathbb{D}$
\begin{equation*}
I(\epsilon,z):=\int_{\mathbb{D}}\left|\sum_{n=0}^{\infty}\beta_n(z\bar w)^n\right|^2(1-|w|^2)^{\epsilon}dA(w)\leq C_{\epsilon}(1-|z|^2)^{\epsilon}.
\end{equation*}
We first use the orthogonality  of monomials to get
\begin{align*}
I(\epsilon,z) 
=& \int_{\mathbb{D}}\left(\sum_{n=0}^{\infty}\beta_n(z\bar w)^n\right)\left(\sum_{n=0}^{\infty}\overline{\beta_n}(\bar{z}w)^n\right)(1-|w|^2)^{\epsilon}dA(w)\\
=&\sum_{n=0}^{\infty}|\beta_n|^2|z|^{2n}\int_{\mathbb{D}}|\bar{w}|^{2n}(1-|w|^2)^{\epsilon}dA(w).
\end{align*}
Using boundedness of $\beta_n$'s and adding up the geometric series we obtain
\begin{align*}
I(\epsilon,z)\lesssim&\int_{\mathbb{D}}\sum_{n=0}^{\infty}|z|^{2n}|\bar{w}|^{2n}(1-|w|^2)^{\epsilon}dA(w)\\
=&\int_{\mathbb{D}}\frac{(1-|w|^2)^{\epsilon}dA(w)}{(1-|zw|^2)}\\
=& \pi\int_0^1\frac{(1-r)^{\epsilon}}{(1-|z|^2r)}dr.
\end{align*}
We break up the last integral into two pieces and estimate separately (recall that $\epsilon\in(-1,0)$)
\begin{align*}
I(\epsilon,z)&=\pi\int_0^{|z|^2}\frac{(1-r)^{\epsilon}}{(1-|z|^2r)}dr+\pi\int_{|z|^2}^1\frac{(1-r)^{\epsilon}}{(1-|z|^2r)}dr\\
&\leq \pi\int_0^{|z|^2}\frac{(1-r)^{\epsilon}}{(1-r)}dr+\pi\frac{1}{1-|z|^2}\int_{|z|^2}^1(1-r)^{\epsilon}dr\\
&=\pi\left(\frac{1}{\epsilon}-\frac{(1-|z|^2)^{\epsilon}}{\epsilon}\right)+\pi\frac{(1-|z|^2)^{\epsilon+1}}{(1-|z|^2)(\epsilon+1)}\\
&\leq\frac{-\pi}{\epsilon}(1-|z|^2)^{\epsilon}+\frac{\pi}{\epsilon+1}(1-|z|^2)^{\epsilon}\\
&=\left(\frac{-\pi}{\epsilon}+\frac{\pi}{\epsilon+1}\right)(1-|z|^2)^{\epsilon}.
\end{align*}
Therefore, we conclude that there exists $C_{\epsilon}>0$ such that for any $z\in \mathbb{D}$
\begin{equation*}
I(\epsilon,z)\leq C_{\epsilon}(1-|z|^2)^{\epsilon}.
\end{equation*}

Since the kernel is symmetric, we similarly get the second inequality in \eqref{Schur}. Now for given $p\in(1,\infty)$ choose $\epsilon=\frac{-1}{pq}$ to finish the proof.\\
\end{proof}

By using this lemma, we obtain the following sufficient condition.

\begin{proposition}\label{second}
If $\{\beta_n\}$ is a sequence such that the difference sequence $\{\beta_{n+1}-\beta_n\}$ is bounded then the associated operator $T$, by \eqref{operator}, is bounded from $L^p(1)$ to $L^p(1)$ for all $p\in (1,\infty)$.\\
\end{proposition}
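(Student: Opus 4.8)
The plan is to deduce the boundedness of $T$ from Lemma \ref{first} together with Schur's lemma, after first absorbing the hypothesis on the differences into the kernel. Set $\delta_0=\beta_0$ and $\delta_n=\beta_n-\beta_{n-1}$ for $n\ge 1$; since $\{\beta_{n+1}-\beta_n\}$ is bounded, the sequence $\{\delta_n\}$ is bounded. Multiplying the kernel in \eqref{operator} by $(1-z\bar w)$ telescopes the coefficients, so that $(1-z\bar w)K(z,w)=\widetilde K(z,w)$, where $\widetilde K(z,w)=\sum_{n=0}^{\infty}\delta_n(z\bar w)^n$ has bounded coefficients. Consequently $|K(z,w)|=|\widetilde K(z,w)|/|1-z\bar w|$, and since $|Tf(z)|\le \int_{\mathbb D}|K(z,w)||f(w)|\,dA(w)$, it suffices to prove that the positive-kernel operator with kernel $|K(z,w)|$ is bounded on $L^p(1)$.

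To this end I would apply Schur's lemma (Lemma \ref{Schur}) with the test function $h(w)=(1-|w|^2)^{a}$ for a small negative exponent $a$ to be chosen. The point is that one should not estimate $|\widetilde K|$ pointwise: because $\{\delta_n\}$ is merely bounded, the only elementary bound available is the lossy radial bound $|\widetilde K(z,w)|\lesssim (1-|z||w|)^{-1}$, which destroys the angular cancellation and produces the wrong power (indeed an extra logarithm) in the Schur integral. Instead I would keep $|\widetilde K|$ squared and invoke the key estimate established inside the proof of Lemma \ref{first}, applied to $\widetilde K$. Concretely, writing the Schur integrand as $|\widetilde K(z,w)|(1-|w|^2)^{ap/2}$ times $(1-|w|^2)^{ap/2}|1-z\bar w|^{-1}$ and applying the Cauchy--Schwarz inequality gives
\[
\int_{\mathbb D}|K(z,w)|(1-|w|^2)^{ap}\,dA(w)\le\Big(\int_{\mathbb D}|\widetilde K(z,w)|^2(1-|w|^2)^{ap}\,dA(w)\Big)^{1/2}\Big(\int_{\mathbb D}\frac{(1-|w|^2)^{ap}}{|1-z\bar w|^2}\,dA(w)\Big)^{1/2}.
\]
Provided $ap\in(-1,0)$, the first factor is at most $C(1-|z|^2)^{ap/2}$ by the estimate $I(\epsilon,z)\le C_\epsilon(1-|z|^2)^{\epsilon}$ from Lemma \ref{first} (with $\epsilon=ap$), while the second factor is at most $C(1-|z|^2)^{ap/2}$ by the standard Forelli--Rudin integral estimate (see \cite{Zhubook}) applied with numerator exponent $ap>-1$ and denominator exponent $2$. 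Multiplying, the Schur integral is bounded by $C(1-|z|^2)^{ap}=C\,h(z)^p$.

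The second Schur inequality follows verbatim after integrating in $z$ rather than $w$: the relevant integrals $\int_{\mathbb D}|\widetilde K(z,w)|^2(1-|z|^2)^{aq}\,dA(z)$ and $\int_{\mathbb D}(1-|z|^2)^{aq}|1-z\bar w|^{-2}\,dA(z)$ are controlled in exactly the same way, since the orthogonality computation of Lemma \ref{first} and the Forelli--Rudin estimate are symmetric in $z$ and $w$; they yield $C(1-|w|^2)^{aq}=C\,h(w)^q$. To make both inequalities valid simultaneously I would choose $a$ so that $ap$ and $aq$ both lie in $(-1,0)$, for instance any $a\in(-1/\max(p,q),0)$; such $a$ exists for every $p\in(1,\infty)$. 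Schur's lemma then delivers the desired $L^p(1)$-boundedness of $T$.

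The main obstacle is precisely the temptation to bound $|\widetilde K|$ pointwise. Routing the argument through the squared kernel, so that Lemma \ref{first} supplies the genuine $L^2$ estimate, and only then splitting off the $|1-z\bar w|^{-1}$ factor by Cauchy--Schwarz, is what recovers the correct power of $(1-|z|^2)$; the remaining bookkeeping of the Forelli--Rudin exponents is routine.
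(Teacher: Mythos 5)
Your proof is correct, but it is organized differently from the paper's. Both arguments begin identically: multiply the kernel by $(1-z\bar w)$ to telescope the coefficients, so that $K(z,w)=\frac{1}{1-z\bar w}\widetilde K(z,w)$ with $\widetilde K$ having bounded coefficients (the paper calls them $b_n$). From there the paper stays at the operator level: it applies a pointwise Cauchy--Schwarz inequality with respect to the measure $|f(w)|\,dA(w)$ to get $|Tf(z)|\le (S_1|f|(z))^{1/2}(S_2|f|(z))^{1/2}$, where $S_1$ and $S_2$ are the squared-kernel operators built from $\sum(z\bar w)^n$ and $\sum b_n(z\bar w)^n$; both fall under Lemma \ref{first} used as a black box (with coefficient sequences $\{1\}$ and $\{b_n\}$), and a second Cauchy--Schwarz in the $z$-integration yields $\|Tf\|_p^{2p}\le\|S_1|f|\|_p^p\,\|S_2|f|\|_p^p$. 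Schur's lemma enters only indirectly, inside Lemma \ref{first}. You instead run a Schur test directly on the positive kernel $|K(z,w)|$, pushing the Cauchy--Schwarz inside the Schur integral; this forces you to open up Lemma \ref{first} and invoke its internal estimate $I(\epsilon,z)\le C_\epsilon(1-|z|^2)^\epsilon$ rather than its statement, and it requires one ingredient the paper never needs, namely the Forelli--Rudin estimate $\int_{\mathbb D}(1-|w|^2)^{s}|1-z\bar w|^{-2}\,dA(w)\lesssim(1-|z|^2)^{s}$ for $s\in(-1,0)$. Your bookkeeping is right: choosing $a\in(-1/\max(p,q),0)$ makes both $ap$ and $aq$ admissible, the second Schur inequality does follow by symmetry because the orthogonality computation works equally well integrating in $z$, and your observation that the pointwise bound $|\widetilde K(z,w)|\lesssim(1-|z||w|)^{-1}$ would cost a logarithm and break the Schur test is a genuine pitfall, correctly avoided. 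The trade-off: the paper's route is more self-contained (it reuses its lemma verbatim, twice, and needs no external integral estimates), while yours is a single direct Schur test on $|K|$ at the cost of one standard estimate cited from the literature.
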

\begin{remark} If $\lambda\equiv 1$ then $\beta_n=\frac{1}{\pi}(n+1)$ so the difference sequence is not only bounded but it is constant $\beta_{n+1}-\beta_{n}=\frac{1}{\pi}$.\\
\end{remark}

\begin{proof} We start with the decomposition of the kernel function $K(z,w)$.
\begin{align*}
K(z,w)=&\sum_{n=0}^{\infty}\beta_n(z\bar{w})^{n}=\frac{1}{1-z\bar{w}}\sum_{n=0}^{\infty}b_n(z\bar{w})^n&\\
=&\sum_{n=0}^{\infty}(z\bar{w})^n\sum_{n=0}^{\infty}b_n(z\bar{w})^n~\text{ for a sequence }\{b_n\}.&
\end{align*}
It is easy to see that $\beta_n=\sum_{k=0}^nb_k$, which implies $b_n=\beta_n-\beta_{n-1}$ (set $\beta_{-1}=0$). Hence, by the assumption in the statement of the proposition, $\{b_n\}$ is a bounded sequence. Furthermore, 
\begin{align*}
|Tf(z)|=&\left|\int_{\mathbb{D}} K(z,w)f(w)dA(w)\right|&\\
\leq&\int_{\mathbb{D}} \left|\sum_{n=0}^{\infty}(z\bar{w})^n\sum_{n=0}^{\infty}b_n(z\bar{w})^n\right||f(w)|dA(w)&\\
\leq&\left(\int_{\mathbb{D}} \left|\sum_{n=0}^{\infty}(z\bar{w})^n\right|^2|f(w)|dA(w)\right)^{\frac{1}{2}}\left(\int_{\mathbb{D}} \left|\sum_{n=0}^{\infty}b_n(z\bar{w})^n\right|^2|f(w)|dA(w)\right)^{\frac{1}{2}}\\
:=&|S_1f(z)|^{\frac{1}{2}}|S_2f(z)|^{\frac{1}{2}}
\end{align*}
where $S_1$ and $S_2$ are respective integral operators.
We integrate both sides with respect to $z$ and apply the Cauchy-Schwarz inequality
\begin{align*}
\int_{\mathbb{D}}|Tf(z)|^pdA(z) \leq &\int_{\mathbb{D}}|S_1f(z)|^{\frac{p}{2}}|S_2f(z)|^{\frac{p}{2}}dA(z)&\\
||Tf||^{2p}_p\leq&||S_1f||^p_p||S_2f||^p_p.\\
\end{align*}
\noindent Note that, $S_1$ and $S_2$ are operators of the type in \eqref{first} and we know such operators are bounded from $L^p(1)$ to $L^p(1)$. Therefore, we get
\begin{equation*}
||Tf||^{2p}_p\leq||S_1f||^p_p||S_2f||^p_p\leq C_1||f||^p_pC_2||f||^p_p\leq C||f||^{2p}_p.
\end{equation*} 
This finishes the proof of Proposition \ref{second}.\\
\end{proof}

\noindent We now give the proof of Theorem \ref{one}.
\begin{proof} We know that $\mathbf{B}_{\lambda}$ is an integral operator of the form \eqref{operator}. Since $\lambda \sim1 $, it is enough to show $\mathbf{B}_{\lambda}$ is bounded from $L^p(1)$ to $L^p(1)$. We use Proposition \ref{second} to do this. Namely, we show that the coefficients $\alpha_n$ of $B_{\lambda}(z,w)$ satisfy the condition in \eqref{second}.

\begin{align*}
\alpha_{n+1}-\alpha_n=&\frac{1}{\int_{\mathbb{D}}|z|^{2n+2}\lambda(z)dA(z)}-\frac{1}{\int_{\mathbb{D}}|z|^{2n}\lambda(z)dA(z)}\\
=&\frac{\int_{\mathbb{D}}(1-|z|^2)|z|^{2n}\lambda(z)dA(z)}{\int_{\mathbb{D}}|z|^{2n+2}\lambda(z)dA(z)\int_D|z|^{2n}\lambda(z)dA(z)}\\
\sim&\frac{\int_{\mathbb{D}}(1-|z|^2)|z|^{2n}dA(z)}{\int_{\mathbb{D}}|z|^{2n+2}dA(z)\int_{\mathbb{D}}|z|^{2n}dA(z)}~\text{ since }\lambda\sim 1\\
\sim & \frac{\int_0^1(1-r^2)r^{2n+1}dr} {\int_0^1r^{2n+3}dr\int_0^1r^{2n+1}dr}~\text{ after switching to polar coordinates}\\
=&(2n+4)(2n+2)\int_0^1(1-r^2)r^{2n+1}dr&\\
=&2&
\end{align*}
We obtain that the sequence $\{\alpha_{n+1}-\alpha_n\}$ is bounded and this finishes the proof of Theorem \ref{one}.\\
\end{proof}
\begin{remark}
An alternative way of proving Theorem \ref{one} is to show that $B_{\lambda}(z,w)$ is a standard kernel in the sense of Coifman-Weiss \cite{CoifmanWeiss71} on the space of homogeneous type $(\mathbb{D},|.|,\lambda)$ where $|.|$ is the Euclidean distance.\\
\end{remark}

\begin{remark}
A generalization and an alternative proof of Theorem \ref{one} appear in \cite{ZeytuncuThesis}.
\end{remark}

\vskip 1cm
\bibliographystyle{plain}
\bibliography{draftNov08}
\vskip .5 cm
\end{document}